\theoremstyle{definition}
\newtheorem{definition}{Definition}[section]
\newtheorem{theorem}[definition]{Theorem}
\newtheorem{lemma}[definition]{Lemma}
\newtheorem{remark}[definition]{Remark}
\newtheorem{example}[definition]{Example}
\newtheorem{proposition}[definition]{Proposition}
\theoremstyle{remark}
\newtheorem*{ack}{Acknowledgments}
\title{Knots not concordant to L-space knots}
\author{Ramazan Yozgyur}
\address{Institute of Mathematics, University of Warsaw, ul. Banacha 2,
02-097 Warsaw, Poland}
\email{ryozgyur@mimuw.edu.pl}
\thanks{The author was supported by  the National Science Center grant 2016/22/E/ST1/00040.}
\begin{document}
\begin{abstract}
In this short note we use methods of Friedl, Livingston and Zentner to show
that there are knots that are not algebraically concordant to a connected sum
of positive and negative L-space knots.
\end{abstract}
\maketitle

\section{Introduction}
A 3-manifold $Y$ is called an L-space if it is a rational homology sphere and its Heegaard-Floer homology has minimal possible rank i.e. $\widehat{HF}(Y)=|H_1(Y,\mathbb{Z})|$.
A knot $K\subset S^3$ is a (positive) L-space knot if a surgery with a sufficiently large positive coefficient is
an L-space. 

L-space knots were introduced by Ozsv\'ath and Szab\'o in \cite{lens} in their attempt to classify
knots such that a surgery on them gives a lens space. In particular, they proved the following
result.

 \begin{theorem}[\expandafter{\cite[Corollary 1.3]{lens}}]\label{thm:lens}
 If $\Delta$ is the Alexander polynomial of an L-space knot, then $\exists$ $a_1,...,a_n$ such that  $a_0<a_1......<a_n$ and
 \begin{equation}\label{eq:lens_poly}
 \Delta=t^{a_0}- t^{a_1}+t^{a_2}....+t^{a_n}
 \end{equation}
 \end{theorem}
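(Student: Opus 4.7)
The plan is to exploit the fact that for an L-space knot $K$, the knot Floer homology $\widehat{HFK}(K,s)$ is highly constrained, and then to read off the Alexander polynomial from the Euler characteristic categorification
\[
\Delta_K(t)\;\doteq\;\sum_s \chi\bigl(\widehat{HFK}(K,s)\bigr)\,t^s.
\]
This reduces the theorem to two separate structural claims about an L-space knot: (i) each bigraded summand $\widehat{HFK}(K,s)$ has rank at most $1$ and is supported in a single Maslov grading; and (ii) the Maslov gradings of the nontrivial summands at consecutive nonzero Alexander gradings $a_{i-1}<a_i$ differ by exactly $1$.

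For (i), I would invoke the large surgery formula of Ozsv\'ath-Szab\'o, which identifies $\widehat{HF}(S^3_n(K),\mathfrak{s})$ for $n\gg 0$ with the homology of an explicit quotient complex of $CFK^\infty(K)$ cut out by the spin-c structure $\mathfrak{s}$. Since by hypothesis large positive surgery is an L-space, each such group has rank exactly one, and unwinding the identification forces the total rank of $\widehat{HFK}$ in each Alexander grading to be $\leq 1$. This immediately implies that the coefficients of $\Delta_K$ lie in $\{-1,0,1\}$.

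For (ii), I would analyse the "staircase" shape of $CFK^\infty(K)$. The essential observation is that the spectral sequence from $\widehat{HFK}$ to $\widehat{HF}(S^3)=\mathbb{Z}$ must collapse onto a single generator, so after cancelling length-one vertical and horizontal differentials the surviving generators form a connected zigzag whose Alexander gradings are precisely the $a_i$ and whose Maslov gradings alternate by $1$. This produces the alternating $\pm 1$ pattern of Euler characteristics, and hence of coefficients of $\Delta_K$.

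The main obstacle, I expect, will be the careful verification of (i): the surgery formula involves several auxiliary complexes $A_s^+$, $B_s^+$ related by maps $h_s,v_s$, and tracking how the L-space condition on the \emph{total} rank of $\widehat{HF}(S^3_n(K))$ propagates down to an \emph{individual} rank bound on $\widehat{HFK}(K,s)$ takes some bookkeeping. Once the structural statements (i) and (ii) are in hand, the conclusion is a direct Euler characteristic computation.
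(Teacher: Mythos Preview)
The paper does not supply a proof of this theorem at all: it is quoted as \cite[Corollary~1.3]{lens} and used as a black box, so there is no ``paper's own proof'' to compare against. Your outline is essentially the original Ozsv\'ath--Szab\'o argument from that reference: the large-surgery formula combined with the L-space hypothesis forces $\widehat{HFK}(K,s)$ to have rank at most one in each Alexander grading, and the staircase structure of $CFK^\infty$ then yields the alternating $\pm 1$ pattern. Your sketch is correct in spirit, and your identification of the bookkeeping in step~(i) as the main technical work is accurate; for the purposes of the present paper, however, no proof is needed, only the citation.
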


The class of L-space knots includes all torus knots. More generally, all algebraic knots are L-space knots; 
see \cite{hed2,GN}. Moreover, in \cite{Hedden} Hedden proves the following result:
\begin{theorem} 
 An L-space knot is strongly quasipositive and fibred.
\end{theorem}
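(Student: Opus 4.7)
The plan is to exploit the extremely rigid structure that the L-space condition forces on the knot Floer complex $\widehat{CFK}^\infty(K)$, and then invoke two detection-type theorems that convert Floer-theoretic data into geometric conclusions (fibredness, strong quasipositivity).

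First I would recall the Ozsv\'ath--Szab\'o structure theorem underlying Theorem \ref{thm:lens}: for an L-space knot, $\widehat{HFK}(K,i)$ is either $0$ or $\mathbb{Z}$ in every Alexander grading $i$, and the non-vanishing gradings are exactly the exponents $a_0<a_1<\dots<a_n$ (and their negatives) appearing in the normalized Alexander polynomial \eqref{eq:lens_poly}. In particular, the top non-vanishing Alexander grading equals $a_n = \deg \Delta$, and the group there has rank $1$.

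To deduce fibredness, I would combine this with two facts. On one hand, Ozsv\'ath--Szab\'o's genus detection says the top Alexander grading in which $\widehat{HFK}$ is non-zero equals the Seifert genus $g(K)$, so in particular $g(K) = a_n$ and $\widehat{HFK}(K,g(K)) \cong \mathbb{Z}$. On the other hand, the Ghiggini--Ni fibredness criterion says that a knot $K$ is fibred if and only if $\widehat{HFK}(K,g(K))$ has rank one. Combining these gives that every L-space knot is fibred.

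For strong quasipositivity I would pass through the concordance invariant $\tau$. From the staircase shape of $\widehat{CFK}^\infty(K)$ for an L-space knot, one reads off directly that $\tau(K) = g(K)$ (the generator in the top Alexander grading survives to $\widehat{HF}(S^3)$). Then I would apply Hedden's earlier theorem that a fibred knot $K$ is strongly quasipositive if and only if $\tau(K) = g(K)$; together with the fibredness established above, this finishes the proof.

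The main obstacle, conceptually, is the first step: extracting the precise ``staircase'' shape of the knot Floer complex from only the L-space surgery hypothesis. Everything downstream (genus $= \deg \Delta$, $\tau = g$, rank-one top summand) is then essentially a bookkeeping consequence. Technically, this requires the large surgery formula relating $\widehat{HF}$ of $S^3_n(K)$ to the subquotients of $\widehat{CFK}^\infty(K)$ and a careful monotonicity argument to force the alternating $0/\mathbb{Z}$ pattern; once available, the combination with Ghiggini--Ni and Hedden's $\tau$-criterion is formal.
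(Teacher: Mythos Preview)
Your sketch is the standard and correct outline: rank-one top knot Floer group from the Ozsv\'ath--Szab\'o structure theorem, fibredness via Ghiggini--Ni, then $\tau(K)=g(K)$ from the staircase together with Hedden's criterion for strong quasipositivity of fibred knots. (One small notational slip: you write $\widehat{CFK}^\infty$ where you presumably mean $CFK^\infty$.)

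There is no approach to compare against, however: the paper does not prove this theorem at all. It is quoted as a result of Hedden and simply cited, so the ``paper's own proof'' is a bare reference. Your proposal thus supplies strictly more than the paper does, and is faithful to how the result is actually established in the literature.
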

We refer to \cite{Rudolph} for the definition and the properties of strongly quasipositive
knots.

We have the following result of Borodzik and Feller.
\begin{theorem}[see \cite{maciejpeter}]
Every link $L$ is topologically concordant to a strongly quasipositive link $L'$. 
\end{theorem}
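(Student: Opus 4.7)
The plan is to modify a Seifert surface $F$ of $L$ by a finite sequence of local operations, each preserving the topological concordance class of $\partial F$, until the resulting surface is strongly quasipositive. The guiding principle is Freedman's disc embedding theorem: a knot with trivial Alexander polynomial bounds a locally flat disc in $B^4$, so any modification of $L$ that amounts to a band sum with a $\Delta = 1$ knot is invisible to topological concordance.

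First, I would fix $F$ and view it as a disc with attached bands. By Rudolph's work, strongly quasipositive links are exactly the boundaries of positive braided surfaces, that is, embedded surfaces assembled from parallel discs and positive Hopf bands. The goal becomes to deform $F$ ambiently into such a positive braided surface $F'$. I would proceed one band at a time: for each band of $F$ whose sign or framing is incompatible with quasipositivity, I would replace it locally by a quasipositive configuration, paying for the replacement by a band sum of $\partial F$ with an auxiliary Alexander polynomial $1$ knot designed to realise exactly the needed correction.

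The heart of the argument is this local cancellation step: given a bad band, one must exhibit a $\Delta = 1$ knot whose band sum precisely corrects the sign or framing defect and produces a positive Hopf band. This demands a careful match of Seifert form contributions, since summing in a $\Delta = 1$ knot can only alter the global Seifert form in a very restricted way. I expect this to be the main obstacle, and I would approach it by first cataloguing the finitely many local models of a ``bad band'' up to handle slides and then constructing for each model an explicit trivial-polynomial companion. Once the local cancellation is in place, a finite induction on the number of bad bands of $F$ yields a strongly quasipositive link $L'$ together with an explicit topological concordance from $L$ to $L'$.
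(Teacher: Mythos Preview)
The paper does not prove this theorem; it is simply quoted from Borodzik--Feller as motivation, with no argument supplied. So there is no proof in the paper to compare your proposal against.

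On the proposal itself: the ingredients are the right ones---Rudolph's description of quasipositive surfaces and Freedman's $\Delta=1$ disc theorem are exactly what Borodzik and Feller use---but the local mechanism you describe cannot work. Flipping the sign or framing of a band of $F$ changes the Seifert form of $\partial F$ and typically shifts the signature, whereas band-summing with any $\Delta=1$ knot $J$ adjoins only a metabolic (hence Witt-trivial) block to the Seifert form. No choice of $J$ can compensate for a signature jump, so your ``local cancellation step'' is obstructed already at the level of algebraic concordance, before topological concordance even enters. The difficulty you flag is not a combinatorial matter of cataloguing finitely many local band models; it is a genuine obstruction, and the induction on bad bands cannot close as stated.

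The actual Borodzik--Feller argument is global rather than band-by-band. Rudolph proved that \emph{every} Seifert matrix is realised by some quasipositive surface, so one obtains a strongly quasipositive $L'$ with the same Seifert form as $L$ from the outset---no local corrections are needed. The substantial work is then to show that $L$ and $L'$, which a priori agree only algebraically, are in fact topologically concordant; this is where Freedman's theorem enters, applied (roughly) to the comparison of $L$ with the specifically constructed $L'$ so that the relevant ``difference'' has trivial Alexander module, rather than to auxiliary knots tied into individual bands.
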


In this light, the main theorem of the paper seems a bit surprising.
\begin{theorem}\label{thm:main}
There are knots that are not concordant to any combinations of L-space knots and their mirrors.
\end{theorem}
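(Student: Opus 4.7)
The plan is to invoke the classical Fox--Milnor factorization theorem: since mirroring preserves the Alexander polynomial, a concordance of $K$ to any connected sum $L_1^{\epsilon_1} \# \cdots \# L_n^{\epsilon_n}$ of L-space knots and their mirrors forces
\[
\Delta_K(t) \cdot \prod_{i=1}^n \Delta_{L_i}(t) \;=\; \pm t^m \cdot f(t)\, f(t^{-1})
\]
in $\mathbb{Z}[t, t^{-1}]$ for some $f \in \mathbb{Z}[t]$, and over $\mathbb{Q}$ this is equivalent to requiring that every irreducible self-reciprocal factor of the left-hand side appears with even multiplicity. This condition is already an obstruction to algebraic concordance, so it suffices for the stronger statement advertised in the abstract.

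Following the algebraic-concordance methodology of Friedl--Livingston--Zentner, the strategy is to pinpoint a knot $K$ whose Alexander polynomial has an irreducible self-reciprocal factor $p(t)$ of odd multiplicity such that $p(t)$ divides no polynomial of the form \eqref{eq:lens_poly}. A natural candidate is the figure-eight knot $4_1$: its Alexander polynomial agrees, up to units in $\mathbb{Z}[t,t^{-1}]$, with $p(t) := t^2 - 3t + 1$, which is irreducible over $\mathbb{Q}$, self-reciprocal, and satisfies $p(1) = -1$.

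The substantive step is to verify that $p$ divides no L-space polynomial. For this I would exploit the real root $\alpha = (3 + \sqrt{5})/2 > 1$ of $p$. Given an L-space polynomial $\Delta(t) = \sum_{i=0}^n (-1)^i t^{a_i}$ with $a_0 < a_1 < \cdots < a_n$, the evaluations $\alpha^{a_0} < \alpha^{a_1} < \cdots < \alpha^{a_n}$ form a strictly increasing sequence of positive reals. Pairing consecutive terms of the alternating sum $\Delta(\alpha) = \sum_i (-1)^i \alpha^{a_i}$ shows that $\Delta(\alpha)$ has a definite sign---positive if $n$ is even, negative if $n$ is odd---and in particular is nonzero. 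Since $p$ is irreducible over $\mathbb{Q}$, this excludes $p \mid \Delta$ for every L-space polynomial $\Delta$, and hence for any product thereof.

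Putting the pieces together: if $4_1$ were concordant to any combination of L-space knots and their mirrors, the factor $p(t)$ would appear with multiplicity exactly one in $\Delta_{4_1}(t) \cdot \prod_i \Delta_{L_i}(t)$, violating the Fox--Milnor parity condition. Hence $4_1$ serves as a witness for Theorem~\ref{thm:main}. The whole argument reduces to the elementary sign analysis at $\alpha$; the only minor subtlety is arranging the candidate $p$ correctly---irreducible and self-reciprocal with $p(1) = \pm 1$ and a real root greater than $1$---so that it is both a genuine Alexander-polynomial factor and incompatible with the rigid coefficient pattern of \eqref{eq:lens_poly}.
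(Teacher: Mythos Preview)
Your argument is correct, and it follows the same global architecture as the paper's proof: invoke the Fox--Milnor factorization, exhibit an irreducible self-reciprocal polynomial $p$ that cannot divide any polynomial of the form~\eqref{eq:lens_poly}, and then realize $p$ as the Alexander polynomial of an explicit knot. Where you diverge is in the choice of $p$ and in the mechanism for ruling out divisibility. The paper works with the degree-four family $P_n = 1+nt-(2n+1)t^2+nt^3+t^4$ and applies Cauchy's bound to conclude that every root of an L-space Alexander polynomial satisfies $|z|<2$, whereas each $P_n$ has a real root in $(-n-2,-n-1)$; the witnessing knots are those with $\Delta=P_n$ (e.g.\ $12n642$ for $n=7$). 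You instead take $p(t)=t^2-3t+1$ and give a direct alternating-sign estimate at the real root $\alpha=(3+\sqrt5)/2>1$, which shows more generally that no L-space polynomial has a positive real root; your witness is the figure-eight knot.

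Your route is more elementary---it sidesteps Cauchy's bound entirely and needs only the monotonicity of $t\mapsto t^a$ for $t>1$---and it produces a lower-degree example. The paper's route, on the other hand, yields an infinite family $P_n$ at once and singles out the roots $\theta_n^\pm$ on the unit circle, which connects more directly with the Casson--Gordon/Levine--Tristram viewpoint emphasized in Friedl--Livingston--Zentner. (Incidentally, your $\alpha$ also satisfies $\alpha>2$, so the paper's Cauchy-bound argument would apply verbatim to your $p$; but your sign argument is self-contained and does not need this.)
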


\section{Proof of Theorem~\ref{thm:main}}
Let us recall the following fact, which can be found e.g. in \cite[Section 8.1]{rahman}.
  \begin{theorem}[Cauchy's bound]
 Suppose $P=\alpha_0+\alpha_1 t+\dots+\alpha_m t^m$ is a complex polynomial 
 with $\alpha_m\neq 0$.
 
 For any root $z$ of $P$ we have $|z|<1+\max_{j<m}\frac{|\alpha_j|}{|\alpha_m|}$.
 \end{theorem}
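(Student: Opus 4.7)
The plan is to establish the classical contrapositive: show that if $|z|\ge 1+M$, where $M=\max_{j<m}|\alpha_j|/|\alpha_m|$, then $P(z)\ne 0$. Equivalently, assume $z$ is a root and derive $|z|<1+M$ directly by extracting the leading term and bounding the rest with the triangle inequality.

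The first step is to isolate the leading monomial. Writing $P(z)=0$ as $\alpha_m z^m=-\sum_{j=0}^{m-1}\alpha_j z^j$ and taking absolute values yields
\[
|\alpha_m|\,|z|^m \le \sum_{j=0}^{m-1}|\alpha_j|\,|z|^j.
\]
The second step is to apply the definition of $M$ to each lower coefficient, replacing $|\alpha_j|$ by $M|\alpha_m|$ and then cancelling $|\alpha_m|$ (which is nonzero by hypothesis). After this reduction we are looking at the purely real inequality $|z|^m\le M\sum_{j=0}^{m-1}|z|^j$.

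The third step handles the geometric sum. If $|z|\le 1$ the bound $|z|<1+M$ is trivial, so we may assume $|z|>1$ and use $\sum_{j=0}^{m-1}|z|^j=(|z|^m-1)/(|z|-1)<|z|^m/(|z|-1)$. Substituting gives $|z|^m<M|z|^m/(|z|-1)$, and dividing by $|z|^m>0$ produces $|z|-1<M$, which is exactly the desired conclusion.

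The argument is entirely elementary and I do not expect any genuine obstacle; the only subtlety worth flagging is the need to separate the cases $|z|\le 1$ and $|z|>1$ before invoking the geometric series, and to remember that the strict inequality in the conclusion comes from the strict bound on the partial sum of a geometric series with ratio greater than one.
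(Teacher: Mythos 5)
Your argument is correct and is the standard proof of Cauchy's bound. Note that the paper itself offers no proof of this statement: it is quoted as a classical result with a reference to Rahman--Schmeisser, so there is nothing to compare against; your write-up simply supplies the textbook argument (isolate $\alpha_m z^m$, bound the tail by $M|\alpha_m|\sum_{j<m}|z|^j$, and sum the geometric series for $|z|>1$). The one micro-point worth tidying: in the case $|z|\le 1$ the inequality $|z|<1+M$ is only ``trivial'' when $M>0$; if $M=0$ then $P=\alpha_m t^m$, whose sole root is $z=0$, so the conclusion still holds, but you should say so rather than call the case trivial outright.
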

  From Cauchy's bound we obtain the following proposition.

 \begin{proposition}\label{prop:root_of_alex}
 The Alexander polynomial of an L-space knot has roots with modulus at most 2.
 \end{proposition}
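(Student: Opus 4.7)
The plan is to combine Theorem~\ref{thm:lens} with Cauchy's bound in a direct way. By Theorem~\ref{thm:lens}, the Alexander polynomial $\Delta$ of an L-space knot can be written as
\[
\Delta = t^{a_0} - t^{a_1} + t^{a_2} - \dots + t^{a_n}
\]
with $a_0 < a_1 < \dots < a_n$. Since the Alexander polynomial is only defined up to multiplication by $\pm t^k$, and multiplying by a power of $t$ does not alter any nonzero root, I would first normalize so that $a_0 = 0$. This produces an honest polynomial $P(t) = \alpha_0 + \alpha_1 t + \dots + \alpha_m t^m$ of degree $m = a_n - a_0$ whose coefficients all lie in $\{-1,0,+1\}$, and whose leading coefficient $\alpha_m$ equals $\pm 1$.

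With $P$ in this form, Cauchy's bound applies immediately: for every root $z$ of $P$,
\[
|z| < 1 + \max_{j < m}\frac{|\alpha_j|}{|\alpha_m|} \leq 1 + 1 = 2,
\]
since $|\alpha_j| \leq 1$ and $|\alpha_m| = 1$. The roots of $\Delta$ coincide with those of $P$ (except possibly $0$, which is not a root of $\Delta$ anyway), so every root of $\Delta$ has modulus strictly less than $2$, which is sharper than what the proposition claims.

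There is really no obstacle here; the whole content of the argument is that Theorem~\ref{thm:lens} forces all coefficients to be $0$ or $\pm 1$, after which Cauchy's bound does the rest. The only minor bookkeeping point worth stating carefully is the normalization by a power of $t$, so that Cauchy's bound (which is stated for ordinary polynomials, not Laurent polynomials) is applicable without altering the set of roots under consideration.
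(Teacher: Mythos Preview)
Your proof is correct and follows the same approach as the paper, which simply notes that Proposition~\ref{prop:root_of_alex} follows from Cauchy's bound applied to the form of $\Delta$ in Theorem~\ref{thm:lens}. Your write-up is more explicit about the normalization step and the fact that the bound is actually strict, but the argument is identical.
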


\begin{remark}
From the multiplicativity of the Alexander polynomial under connected sums we infer also that
if $K$ is a connected sum of L-space knots and their mirrors, then
$\Delta_K(t)$ has all roots inside of the disk of radius~2. Note that
by \cite{krcatovich}, a non-trivial connected sum of L-space knots is not an L-space
knot anymore.
\end{remark}

Following \cite{FLZ}, for $n=1,2,\dots$ we define
\begin{equation}\label{eq:defpoly}
P_n=1+nt-(2n+1)t^2+nt^3+t^4.
\end{equation}

We need some properties of roots of this polynomial. 
\begin{theorem}[see \expandafter{\cite[Lemma 4.]{FLZ}}]
%
%
The polynomial $P_n$ is irreducible over $\mathbb{Q}[t]$ and
it has two roots on the unit circle.
\end{theorem}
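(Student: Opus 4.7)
The plan is to exploit the palindromic structure of $P_n$ (coefficients $1, n, -(2n+1), n, 1$ read the same forwards and backwards) to reduce the problem to a quadratic. Dividing by $t^2$ and substituting $y = t + 1/t$, and using $t^2 + 1/t^2 = y^2 - 2$, transforms $P_n$ into
\[
P_n(t)/t^2 = Q_n(y), \qquad Q_n(y) := y^2 + ny - (2n+3).
\]
Under this substitution, $t$ lies on the unit circle iff $y = 2\,\mathrm{Re}(t)$ is real and lies in $[-2,2]$, so roots of $P_n$ on the unit circle correspond to real roots of $Q_n$ in that interval.

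For the unit-circle claim I would evaluate $Q_n$ at the endpoints: $Q_n(2) = 1$ and $Q_n(-2) = 1 - 4n$. For $n \geq 1$ the latter is strictly negative, so by the intermediate value theorem $Q_n$ has a root $y_+ \in (-2, 2)$, yielding a pair of complex-conjugate roots $e^{\pm i\theta}$ of $P_n$ on the unit circle. The second root $y_-$ of $Q_n$ is then less than $-2$ and contributes a pair of real reciprocal roots $\gamma, 1/\gamma$ of $P_n$ off the unit circle.

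For irreducibility, first rule out linear factors: the rational root candidates are $\pm 1$, and $P_n(1) = 1$, $P_n(-1) = 1 - 4n$ are both nonzero. Any factorization $P_n = A \cdot B$ into two monic rational quadratics must respect complex conjugation, so the conjugate pair $e^{\pm i\theta}$ must sit in the same factor. That factor therefore equals $t^2 - 2\cos\theta \cdot t + 1$, which is palindromic, and the complementary factor is $t^2 - (\gamma + 1/\gamma) t + 1$, also palindromic. Hence a $\mathbb{Q}$-factorization exists iff both roots $y_\pm$ of $Q_n$ are rational, equivalently iff $\mathrm{disc}(Q_n) = (n+2)(n+6)$ is a perfect square.

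The one genuine obstacle is the final arithmetic step. Writing $(n+2)(n+6) = (n+4)^2 - 4 = k^2$ gives $(n+4-k)(n+4+k) = 4$. For $n \geq 1$ we have $n+4+k \geq 5 > 4$, so no positive integer solution exists, and $\mathrm{disc}(Q_n)$ is never a perfect square. This completes the irreducibility.
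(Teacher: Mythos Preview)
Your argument is correct. The paper itself does not supply a proof of this statement; it simply cites \cite[Lemma~4]{FLZ}. Your approach via the substitution $y=t+t^{-1}$ is the standard one for palindromic quartics, and every step checks out: the reduction $P_n(t)/t^2=Q_n(y)=y^2+ny-(2n+3)$ is computed correctly; the sign change $Q_n(2)=1$, $Q_n(-2)=1-4n<0$ forces exactly one root of $Q_n$ in $(-2,2)$ and the other below $-2$, giving precisely two unit-circle roots of $P_n$; and the irreducibility argument is sound because any real quadratic factor must contain the conjugate pair $e^{\pm i\theta}$, forcing the factorization $(t^2-y_+t+1)(t^2-y_-t+1)$, which is rational iff $\operatorname{disc}(Q_n)=(n+2)(n+6)$ is a perfect square. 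Your Pell-type finish $(n+4-k)(n+4+k)=4$ with $n+4+k\ge 5$ is clean; it may be worth making explicit that $k<n+4$ (since $k^2=(n+4)^2-4$), so both factors are positive integers and the bound $n+4-k\le 4/5<1$ gives the contradiction.
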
 
Denote these two roots by $\theta_n^+$ and $\theta_n^-$. We will use a
more specific control for one of the other roots of $P_n$.
\begin{lemma}
For $n\ge 1$, the polynomial $P_n$ has a root with modulus greater than $2$.
\end{lemma}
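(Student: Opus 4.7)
The plan is to exploit the fact that $P_n$ is palindromic (its coefficient vector $(1,n,-(2n+1),n,1)$ is symmetric), so the substitution $u = t + 1/t$ reduces the problem to a quadratic. Dividing $P_n$ by $t^2$ and rewriting,
\begin{equation*}
t^{-2}P_n(t) = (t^2 + t^{-2}) + n(t + t^{-1}) - (2n+1) = u^2 + nu - (2n+3),
\end{equation*}
so the roots of $P_n$ come in pairs $\{t, 1/t\}$ where $t + 1/t$ is a root of $Q(u) := u^2 + nu - (2n+3)$. This immediately explains the pairing of roots: two of them (the pair on the unit circle) satisfy $t+1/t \in [-2,2]$, while the remaining pair satisfies $|t| \neq 1$ with the two values being reciprocals of each other. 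Identifying the root of $Q$ giving the off-unit-circle pair, and then solving a second quadratic in $t$, will produce the desired root of modulus greater than $2$.

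First I would compute the two roots of $Q$ explicitly,
\begin{equation*}
u_\pm = \tfrac{1}{2}\bigl(-n \pm \sqrt{n^2 + 8n + 12}\bigr),
\end{equation*}
and check that $u_+ \in (0,2)$ for every $n \geq 1$ (this follows from $12 < 16$ after squaring the inequality $\sqrt{n^2+8n+12} < n+4$). Thus $u_+$ accounts for the two unit-circle roots $\theta_n^\pm$, and the remaining two roots of $P_n$ come from solving $t + 1/t = u_-$, i.e. $t^2 - u_- t + 1 = 0$. Since $u_-<-2$ (a similar squaring argument shows $u_- < -2$ for $n \geq 1$), the discriminant $u_-^2 - 4$ is positive and both resulting roots of $P_n$ are real and negative. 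The one with larger modulus is
\begin{equation*}
t_- = \tfrac{1}{2}\bigl(u_- - \sqrt{u_-^2 - 4}\bigr).
\end{equation*}

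Second, I would show $|t_-| > 2$. This inequality is equivalent to $\sqrt{u_-^2 - 4} > u_- + 4$ (after multiplying by $-1$). If $u_- \leq -4$, the right-hand side is non-positive and the claim is trivial; otherwise squaring yields the equivalent condition $u_- < -5/2$. Plugging in the formula for $u_-$, this becomes $\sqrt{n^2 + 8n + 12} > 5 - n$, which is obvious for $n \geq 5$ and, after squaring for $n \leq 4$, reduces to $18n > 13$, true for every $n \geq 1$. This completes the argument.

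The only step that requires any care is the case analysis needed to verify $u_- < -5/2$ (and similarly $u_- < -2$ and $u_+ < 2$) uniformly in $n$: one must be careful to square only when both sides are non-negative, which forces a split into $n \leq 4$ and $n \geq 5$. Aside from this bookkeeping, the proof is a straightforward exploitation of the palindromic structure of $P_n$, and I do not anticipate any serious obstacle.
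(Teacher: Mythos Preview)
Your argument is correct, but the paper takes a much shorter route. Instead of exploiting the palindromic structure and reducing to the auxiliary quadratic $Q(u)$, the paper simply evaluates $P_n$ at two well-chosen integers: one computes $P_n(-n-1)=1-2n-3n^2-n^3<0$ and $P_n(-n-2)=13+10n+2n^2>0$, so by the intermediate value theorem $P_n$ has a real root in the open interval $(-n-2,-n-1)$, which for every $n\ge 1$ lies strictly to the left of $-2$.

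Your approach has the advantage of being more structural: the substitution $u=t+t^{-1}$ explains transparently why two roots lie on the unit circle and the other two are real reciprocals, and it would generalize to other palindromic quartics. The paper's approach, on the other hand, is a two-line computation requiring no case analysis and even pins down the large root to within an interval of length one. Both are valid; the paper's is simply quicker for this specific lemma.
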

\begin{proof}
We have $P_n(-n-1)=1-2n-3n^2-n^3<0$. On the other hand, $P(-(n+2))=13+10n+2n^2>0$. Therefore
$P_n$ has a real root in the interval $(-n-2,-n-1)$.
\end{proof}

\begin{theorem}\label{thm:cantvanish}
Let $K$ be connected sum of L-space knots and some mirrors of L-space knots, then $\Delta_K $ cannot vanish at $\theta_n^+$. 
\end{theorem}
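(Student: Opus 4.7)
The plan is to combine three facts already established in the excerpt: multiplicativity of the Alexander polynomial under connected sum, irreducibility of $P_n$ over $\mathbb{Q}$, and the modulus bound in Proposition~\ref{prop:root_of_alex}.

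First I would write $K = K_1 \# \cdots \# K_r$, where each $K_i$ is either an L-space knot or the mirror of one. Since the Alexander polynomial is invariant under mirroring, each $\Delta_{K_i}$ agrees with the Alexander polynomial of some L-space knot, so by Proposition~\ref{prop:root_of_alex} every root of $\Delta_{K_i}$ has modulus at most $2$. By the multiplicativity of the Alexander polynomial under connected sums,
\begin{equation*}
\Delta_K(t)=\prod_{i=1}^r \Delta_{K_i}(t).
\end{equation*}

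Next, suppose for contradiction that $\Delta_K(\theta_n^+)=0$. Then $\Delta_{K_i}(\theta_n^+)=0$ for some $i$. Since $\Delta_{K_i}\in\mathbb{Q}[t]$ (after normalization it has integer coefficients) and since $P_n$ is irreducible over $\mathbb{Q}[t]$, the polynomial $P_n$ is, up to a rational scalar, the minimal polynomial of $\theta_n^+$ over $\mathbb{Q}$. Hence $P_n$ divides $\Delta_{K_i}$ in $\mathbb{Q}[t]$, which implies that every root of $P_n$ is a root of $\Delta_{K_i}$.

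Finally I would apply the preceding lemma: $P_n$ has a real root in $(-n-2,-n-1)$, whose modulus is strictly greater than $2$. This root is then a root of $\Delta_{K_i}$, contradicting the fact that all roots of $\Delta_{K_i}$ lie in the disk of radius $2$. Therefore $\Delta_K(\theta_n^+)\neq 0$.

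The only delicate point is the passage from $\Delta_{K_i}(\theta_n^+)=0$ to $P_n\mid\Delta_{K_i}$, which relies essentially on the irreducibility of $P_n$ over $\mathbb{Q}$; all other ingredients are routine once that is granted. The treatment of mirrors is painless because they do not change the Alexander polynomial.
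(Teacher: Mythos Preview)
Your proof is correct and follows essentially the same route as the paper: reduce to a single L-space knot via multiplicativity (and invariance under mirroring), then use irreducibility of $P_n$ to force $P_n\mid\Delta_{K_i}$, contradicting the modulus bound via the large real root of $P_n$. The only cosmetic difference is that the paper phrases the divisibility step in terms of $\gcd(\Delta_K,P_n)$ rather than minimal polynomials.
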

\begin{proof}
 Since the Alexander polynomial of a connected sum of knots is the product of the Alexander polynomial of each knot, it is enough to prove the theorem for $K$ being an L-space knot.
 
 Suppose $\Delta_K(\theta_n^+)=0$. Then $\gcd(\Delta_K,P_n)$ has positive degree
 and it divides $P_n$. As $P_n$ is irreducible over $\mathbb{Q}[t]$, it follows
 that $P_n|\Delta_K$. But $P_n$ has a root outside a disk of radius $2$ and all
 the roots of $\Delta_K$ are inside this disk.
\end{proof}
\begin{theorem}
Suppose $K$ is a knot that is concordant to a knot $K'$ which is a connected sum of L-space knots and mirrors
of L-space knots. Then the order of the root of $\Delta_K$ at $\theta_n^+$ is an even
number.
\end{theorem}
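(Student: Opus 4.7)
The plan is to use the Fox--Milnor condition together with Theorem~\ref{thm:cantvanish}. Since $K$ is concordant to $K'$, the knot $K\#(-K')$ is slice, and the Fox--Milnor theorem gives
\begin{equation*}
\Delta_K(t)\,\Delta_{K'}(t)\doteq f(t)\,f(t^{-1})
\end{equation*}
for some $f\in\mathbb{Z}[t,t^{-1}]$, where $\doteq$ denotes equality up to multiplication by a unit $\pm t^k$ (this uses that the Alexander polynomial is multiplicative under connected sum and invariant under mirroring).

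Next I would compute the order of vanishing of both sides at $\theta_n^+$. By Theorem~\ref{thm:cantvanish}, $\Delta_{K'}(\theta_n^+)\ne 0$, so the order of vanishing of the left-hand side at $\theta_n^+$ equals $\operatorname{ord}_{\theta_n^+}\Delta_K$. On the right-hand side, I would write
\begin{equation*}
\operatorname{ord}_{\theta_n^+}\bigl(f(t)\,f(t^{-1})\bigr)=\operatorname{ord}_{\theta_n^+}f(t)+\operatorname{ord}_{1/\theta_n^+}f(t).
\end{equation*}

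The key input is that $\theta_n^+$ lies on the unit circle, so $1/\theta_n^+=\overline{\theta_n^+}$. Since $f$ has real coefficients, its roots come in complex conjugate pairs with equal multiplicity, hence $\operatorname{ord}_{1/\theta_n^+}f(t)=\operatorname{ord}_{\overline{\theta_n^+}}f(t)=\operatorname{ord}_{\theta_n^+}f(t)$. Combining,
\begin{equation*}
\operatorname{ord}_{\theta_n^+}\Delta_K=2\operatorname{ord}_{\theta_n^+}f(t),
\end{equation*}
which is even, as required.

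The only subtlety I foresee is bookkeeping around the unit factor in the Fox--Milnor relation and making sure the orientation conventions used to define $-K'$ (mirror vs.\ reverse) are consistent with $\Delta_{-K'}=\Delta_{K'}$; both facts are standard and follow from the symmetry $\Delta(t)\doteq\Delta(t^{-1})$. No new ideas beyond Theorem~\ref{thm:cantvanish} and the unit-circle location of $\theta_n^+$ seem to be needed.
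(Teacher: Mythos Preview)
Your proof is correct and follows essentially the same route as the paper: both arguments combine Theorem~\ref{thm:cantvanish} with the Fox--Milnor relation and then use that $\theta_n^+$ lies on the unit circle together with real coefficients to show that $f(t)f(t^{-1})$ vanishes to even order at $\theta_n^+$. The only cosmetic difference is that the paper writes the concordance condition in the symmetric form $\Delta_K(t)f(t)f(t^{-1})=\Delta_{K'}(t)g(t)g(t^{-1})$ rather than your product form $\Delta_K\Delta_{K'}\doteq f(t)f(t^{-1})$, which leads to one extra application of the same parity observation.
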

\begin{proof}
As $K$ and $K'$ are concordant, there exists polynomials $f,g\in\mathbb{Z}[t,t^{-1}]$
such that
\begin{equation}\label{eq:conc}
\Delta_K(t)f(t)f(t^{-1})= \Delta_{K'}(t)g(t)g(t^{-1})
\end{equation}

\emph{Claim.} If $\xi(t)\in\mathbb{Z}[t,t^{-1}]$ vanishes at $\theta_n^+$, then
$\xi^{-1}(t)$ vanishes at $\theta_n^+$ with the same order.

To prove the claim, note that if $\xi$ vanishes at $\theta_n^+$ up to order $s$,
then it is divisible by $(t-\theta_n^+)^s$ and also by $(t-\theta_n^-)^s$ (because
$\xi$ has real coefficients and $\overline{\theta_n^+}=\theta_n^-$).
As $\theta_n^+\theta_n^-=1$ we have
\[(t^{-1}-\theta_n^+)(t^{-1}-\theta_n^{-})=t^{-2}(t-\theta_n^+)(t-\theta_n^-).\]
From the above identity the claim follows readily.

From the claim we conclude that the order of the root of $\Delta_{K'}g(t)g(t^{-1})$
at $\theta_n^+$ is an even number. Using the claim once again, this time for $f(t)$,
we see that \eqref{eq:conc} implies that $\Delta_K(t)$ vanishes at $\theta_n^+$
up to an even power (maybe zero).
\end{proof}
To conclude the proof of Theorem~\ref{thm:main} we will show
that there exist knots such that their Alexander polynomial vanishes at $\theta_n^+$
with an odd order.
As $P_n$ is a symmetric polynomial and $P_n(1)=1$, for any $n$ there exist
a knot $K_n$ such that $\Delta_{K_n}=P_n$, see \cite{Seifert}.
Furthermore, the knot $K_n$ from \cite[Figure 1]{FLZ} has Alexander polynomial $P_n$.
\begin{example}
A notable example of a knot that is not concordant to a combination of L-space
knots is the knot $12n642$. According to KnotInfo web page \cite{info},
its Alexander polynomial is $P_7$. On the other hand, $12n642$ is strongly
quasipositive and fibered. It is also almost positive in the sense of \cite{FLL}.
\end{example}


%
%
\begin{ack}
The author is very grateful to his advisor, Maciej Borodzik, for his help during
preparation of the manuscript. He also expresses his gratitude towards Chuck Livingston and Andras Stipsicz for his comments on the first version of the manuscript.
\end{ack}
\bibliographystyle{amsalpha}
\def\MR#1{}
\bibliography{biblio}
\end{document}